\documentclass[leqno,12pt]{amsart} 
\setlength{\textheight}{23cm}
\setlength{\textwidth}{16cm}
\setlength{\oddsidemargin}{0cm}
\setlength{\evensidemargin}{0cm}
\setlength{\topmargin}{0cm}
\usepackage{amssymb}
\usepackage{amsmath}
\usepackage{amsthm}
\usepackage{amscd}
\usepackage{graphicx}
\usepackage[dvips]{color}
\usepackage[all]{xy}
%
%
%
\theoremstyle{plain} 
\newtheorem{theorem}{\indent\sc Theorem}[section]
\newtheorem{lemma}[theorem]{\indent\sc Lemma}
\newtheorem{corollary}[theorem]{\indent\sc Corollary}

\theoremstyle{definition} 
\newtheorem{definition}[theorem]{\indent\sc Definition}
\newtheorem{remark}[theorem]{\indent\sc Remark}
\newtheorem{example}[theorem]{\indent\sc Example}

%

%


\begin{document}

\title[On the minimality of canonically attached singular Hermitian metrics]
{On the minimality of canonically attached singular Hermitian metrics on
certain nef line bundles} 

\author[T. Koike]{Takayuki Koike} 

\subjclass[2010]{ 
Primary 32J25; Secondary 14C20. 
}
%
\keywords{ 
Nef line bundles, Singular Hermitian metrics, Minimal singular metrics, Ueda theory. 
}
\address{
Graduate School of Mathematical Sciences, The University of Tokyo \endgraf
3-8-1 Komaba, Meguro-ku, Tokyo, 153-8914 \endgraf
Japan
}
\email{tkoike@ms.u-tokyo.ac.jp}

\maketitle

\begin{abstract}
We apply Ueda theory to a study of singular Hermitian metrics of a (strictly) nef line bundle $L$. 
Especially we study minimal singular metrics of $L$, metrics of $L$ with the mildest singularities
among singular Hermitian metrics of $L$ whose local weights are plurisubharmonic. 
In some situations, 
we determine a minimal singular metric of $L$. 
As an application, we give new examples of (strictly) nef line bundles which admit no smooth Hermitian metric with semi-positive curvature. 
\end{abstract}

\section{Introduction}
Our interest is the singularity of minimal singular metrics on nef line bundles over smooth projective surfaces. 
Minimal singular metrics of a line bundle $L$ are metrics of $L$ with the mildest singularities
among singular Hermitian metrics of $L$ whose local weights are plurisubharmonic. Minimal singular metrics were introduced in [DPS00, 1.4] as a (weak) analytic analogue
of the Zariski decomposition, and always exist when $L$ is pseudo-effective ([DPS00, 1.5]). 
In this paper, we mainly consider a topologically trivial line bundle on a surface which is defined by a smooth embedded curve with a neighborhood of non-trivial complex structure (rigorously speaking, when the curve is {\it of finite type} in the sense of \cite[p. 589]{U}. See Definition \ref{type} here). 
The goal of this paper is to determine a minimal singular metric of such a line bundle. 
The main theorem is as follows.

\begin{theorem}\label{main_theorem}
Let $X$ be a smooth complex surface and $C\subset X$ be an embedded smooth compact complex curve. 
Assume $(C^2)=0$ and the pair $(C, X)$ is of finite type. 
Then a singular Hermitian metric $|f_C|^{-2}$ on $\mathcal{O}_X(C)$ has minimal singularities, where $f_C\in H^0(X, \mathcal{O}_X(C))$ is a section whose zero divisor is $C$. 
Especially, $\mathcal{O}_X(C)$ is nef, however it admits no smooth Hermitian metric with semi-positive curvature. 
\end{theorem}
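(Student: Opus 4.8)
\emph{Plan.} Fix a smooth Hermitian metric $h_\infty$ on $\mathcal O_X(C)$ and, for any singular Hermitian metric $h$, let $\varphi_h$ denote its weight relative to $h_\infty$, so that $h=h_\infty e^{-\varphi_h}$ and $h$ has plurisubharmonic local weights if and only if $\varphi_h$ is $\sqrt{-1}\,\Theta_{h_\infty}$-plurisubharmonic. For $h_0:=|f_C|^{-2}$, the Lelong--Poincar\'e formula gives $\sqrt{-1}\,\Theta_{h_0}(\mathcal O_X(C))=[C]\ge 0$, so $\varphi_0:=\varphi_{h_0}$ has plurisubharmonic local weights; hence $\mathcal O_X(C)$ is pseudo-effective and a minimal singular metric $h_{\min}=h_\infty e^{-\varphi_{\min}}$ exists. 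Normalizing $h_\infty$ so that $\varphi_0\le 0$, we may take $\varphi_{\min}=\bigl(\sup\{\varphi:\varphi\text{ is }\sqrt{-1}\,\Theta_{h_\infty}\text{-psh and }\varphi\le 0\}\bigr)^{*}$; then $\varphi_{\min}\ge\varphi_0$ automatically, and since any two minimal singular metrics have equivalent singularities, it suffices to prove the reverse estimate $\varphi_{\min}\le\varphi_0+O(1)$. Equivalently, writing $u:=\varphi_{\min}-\varphi_0\ge 0$, I must show that $u$ is bounded above.

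First I would localize near $C$. Since $f_C$ is nowhere vanishing on $X\setminus C$, the weight $\varphi_0$ is pluriharmonic there, so $u$ is plurisubharmonic on $X\setminus C$; being non-negative it is not identically $-\infty$, and the maximum principle on the compact surface $X$ gives $\sup_{X\setminus C}u=\limsup_{x\to C}u(x)$. Thus the theorem reduces to showing that the non-negative plurisubharmonic function $u$ on a deleted neighborhood $V\setminus C$ of $C$ is bounded near $C$. Two soft observations give partial control: $\varphi_{\min}$ is plurisubharmonic and (after our normalization) $\le 0$, so $u\le-\varphi_0$ grows at most logarithmically towards $C$; and $[C]$ is nef with $([C]\cdot[C])=0$, so every closed positive $(1,1)$-current in the class $\{[C]\}$ has generic Lelong number along $C$ at most that of $\varphi_0$ --- whence, by a Siu-decomposition argument in a neighborhood of $C$, $u$ is bounded above near $C$ if and only if the generic Lelong number of $\varphi_{\min}$ along $C$ equals that of $\varphi_0$. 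What is not soft is this remaining lower bound, i.e.\ the absence of positive currents in $\{[C]\}$ strictly less singular than $[C]$ along $C$, and this is exactly the point at which the finite-type hypothesis is indispensable.

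The remaining step uses Ueda theory (\cite{U}). Because $(C^2)=0$, the normal bundle $N:=N_{C/X}=\mathcal O_X(C)|_C$ has degree $0$ and hence is unitary flat; and because $(C,X)$ is of finite type, say of type $n$, there are a finite covering $\{U_j\}$ of a neighborhood of $C$, coordinates $(z_j,w_j)$ with $C\cap U_j=\{w_j=0\}$, and a system $\{e_{ij}\}$ of constant-modulus transition functions of $N$ such that the transition relations $w_i=f_{ij}w_j$ of $\mathcal O_X(C)$ satisfy $f_{ij}=e_{ij}\bigl(1+O(w_j^{\,n})\bigr)$, while the Ueda obstruction class $u_n(C,X)\in H^1(C,N^{-n})$ is non-zero. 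Choosing frames so that $f_C$ is represented by $w_j$ on $U_j$ (so $\varphi_0|_{U_j}=\log|w_j|^2+O(1)$), I would argue by contradiction: if $u$ were unbounded near $C$, then expanding $\varphi_{\min}=u+\log|w_j|^2+O(1)$ along $C$ in the variables $w_j,\bar w_j$ and using plurisubharmonicity, the leading unbounded contribution to $u$ must come from a \emph{global} object on $C$ --- in effect the real part of a section of a negative power $N^{-m}$, $1\le m\le n$, of the degree-$0$ bundle $N$ --- which the vanishing of $u_1,\dots,u_{n-1}$ permits at lower orders but which the non-vanishing of $u_n(C,X)$ finally obstructs. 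Therefore $u$ is bounded above, i.e.\ $\varphi_{\min}\le\varphi_0+O(1)$, and $h_0=|f_C|^{-2}$ has minimal singularities. Finally, $\mathcal O_X(C)$ is nef because $C$ is an irreducible curve with $(C^2)=0$; and it admits no smooth Hermitian metric with semi-positive curvature, since the weight of such a metric would be a locally bounded function $\varphi$ satisfying $\varphi\le\varphi_{\min}+O(1)\le\varphi_0+O(1)$, which is impossible as $\varphi_0\to-\infty$ along $C$.

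The main obstacle, as indicated, is the last step: converting a hypothetical plurisubharmonic function on $V\setminus C$ which is bounded below but unbounded near $C$ into a contradiction with $u_n(C,X)\ne 0$. Its difficulty is that one must control not merely the Lelong number of the weight along $C$ --- already pinned down by $(C^2)=0$ and nefness --- but the full bounded deviation $\varphi_{\min}-\log|f_C|^2$, which requires the order-by-order analysis of the complex-analytic structure of a neighborhood of $C$ that constitutes finite-type Ueda theory; it is precisely the flatness of $N$ (a consequence of $(C^2)=0$) together with the non-vanishing of $u_n$ that provides ``no non-trivial global datum at order $\le n$'' as the mechanism of the contradiction.
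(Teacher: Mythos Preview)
Your setup and reduction are sound and closely parallel the paper's: you correctly isolate the function $u=\varphi_{\min}-\varphi_0$, observe it is psh on $V\setminus C$, bounded below, and grows at most like $-\log|w_j|$ toward $C$, and recognize that the crux is to bound $u$ from above near $C$. The paper carries out essentially the same reduction, via the slightly different device of replacing an arbitrary semi-positively curved $h$ by $\min\{h,|f_C|^{-2}\}$; this avoids appealing to the existence of a minimal singular metric and to any global maximum principle. Note that $X$ is \emph{not} assumed compact in the statement, so your invocation of ``the maximum principle on the compact surface $X$'' and of \cite[1.5]{DPS00} are not quite licit, though this is easily repaired and is not the main issue.

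The genuine gap is the final step. What you sketch --- expanding $\varphi_{\min}$ as a series in $w_j,\bar w_j$ and extracting from the ``leading unbounded contribution'' a global section of some $N^{-m}$ that is obstructed by $u_n\ne 0$ --- is not a proof: a general psh function admits no such expansion, and the mechanism you describe is not how the obstruction actually enters. The paper does not attempt any such argument; it simply quotes Ueda's theorem \cite[Theorem~2]{U} (restated here as Theorem~\ref{ueda_theorem}): if $\mathrm{type}\,(C,X)=n$ and $\Psi$ is psh on $V\setminus C$ with $\Psi(p)=o(\mathrm{dist}(p,C)^{-a})$ for some $a<n$, then $\Psi$ is constant on a smaller $V_0\setminus C$. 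Since your $u\ge 0$ grows at most logarithmically, it satisfies this hypothesis for any $0<a<n$ (and $n\ge 1$), so Ueda's theorem immediately forces $u$ to be constant near $C$, hence bounded. The actual proof of that theorem in \cite{U} proceeds not by series-expanding an arbitrary $\Psi$ but by first using $u_n\ne 0$ to construct a specific strictly psh function on $V\setminus C$ with growth of exact order $n$, and then running a comparison argument against it; your heuristic does not capture this, and you should cite Theorem~\ref{ueda_theorem} rather than try to reprove it.
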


We can apply Theorem \ref{main_theorem} to a line bundle defined by the section of a certain ruled surface. 
We are interested in such a situation since it includes the example of Demailly, Peternell, and Schneider \cite[1.7]{DPS94}, 
which is constructed as a nef line bundle which admits no smooth Hermitian metric with semi-positive curvature. 
We determine a minimal singular metric of such a line bundle, 
and in particular give a generalization of their result. 

\begin{corollary}[Example \ref{gen_dps_eg}]\label{gen_dps}
Let $C$ be a smooth projective curve and  $L$ be a topologically trivial line bundle on $C$ such that $H^1(C, L)\not=0$. 
For example, this condition is satisfied for all topologically trivial line bundle when $C$ is a curve of genus greater than $1$. 
Take a non-zero element $\xi\in H^1(C, L)$ 
and let $E$ be the rank-two vector bundle on $C$ which appears in the non-splitting exact sequence 
$0\to L\to E\to\mathcal{O}_C\to 0$ corresponding to the class $\xi$. 
Denote by $X$ a rational surface $\mathbb{P}(E)$ over $C$ and by $D$ the section of the map $\mathbb{P}(E)\to\mathcal{O}_C$. 
Then a singular Hermitian metric $|f_D|^{-2}$ on $\mathcal{O}_X(D)$ has minimal singularities, where 
$f_D\in H^0(X, \mathcal{O}_X(D))$ is a section whose zero divisor is $D$. 
Especially the line bundle $\mathcal{O}_X(D)$ is nef, however it admits no smooth Hermitian metric with semi-positive curvature. 
\end{corollary}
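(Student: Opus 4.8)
The plan is to deduce the Corollary from Theorem~\ref{main_theorem}, applied to the ruled surface $\pi\colon X=\mathbb{P}(E)\to C$ and the section $D\subset X$ determined by the quotient $E\to\mathcal{O}_C$. Thus we have to verify four things: that $X$ is a smooth complex surface, that $D$ is an embedded smooth compact complex curve, that $(D^2)=0$, and that the pair $(D,X)$ is of finite type in the sense of Definition~\ref{type}. The first two are immediate, since $X=\mathbb{P}(E)$ is a smooth projective surface and $D$, being a section of $\pi$, is isomorphic to $C$ and embedded in $X$.

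For the self-intersection number, recall the standard description of the normal bundle of a section of a projectivized rank-two bundle: the section determined by the surjection $E\to\mathcal{O}_C$ with kernel $L$ has normal bundle $N_{D/X}\cong\mathcal{H}om(L,\mathcal{O}_C)\cong L^{-1}$ as a line bundle on $D\cong C$, equivalently conormal bundle $N_{D/X}^{*}=\mathcal{O}_X(-D)|_D\cong L$. Since $L$ is topologically trivial, $\deg N_{D/X}=0$, hence $(D^2)=0$; moreover $N_{D/X}\in\mathrm{Pic}^{0}(C)$, which is the regime in which Ueda's obstruction-theoretic invariants are defined.

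The crux, and the step I expect to be the main obstacle, is to show that $(D,X)$ is of finite type. The idea is to write down explicit local defining functions for $D$ in $X$ and to read off Ueda's obstruction classes from them. Choose a sufficiently fine open cover $\{U_i\}$ of $C$ over which the extension $0\to L\to E\to\mathcal{O}_C\to0$ is represented by block upper-triangular transition matrices $\left(\begin{smallmatrix} a_{ij} & b_{ij}\\ 0 & 1\end{smallmatrix}\right)$, where $\{a_{ij}\}$ is a cocycle of transition functions of $L$ and the \v{C}ech $1$-cocycle $\{b_{ij}\}$ represents the extension class $\xi\in\mathrm{Ext}^{1}(\mathcal{O}_C,L)=H^{1}(C,L)$. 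Let $w_i$ be the fiber coordinate on $\pi^{-1}(U_i)$ that vanishes along $D$, so that each $w_i$ is a local defining function of $D$. A direct computation of the induced coordinate changes on $\mathbb{P}(E)$ gives, on overlaps, $w_j=a_{ij}\,w_i-a_{ij}b_{ij}\,w_i^{2}+O(w_i^{3})$, the leading coefficients $a_{ij}$ being precisely the transition functions of $N_{D/X}^{*}\cong L$. Hence $\{w_i\}$ realizes the conormal bundle to first order, and the obstruction to replacing it by a system matching the conormal bundle to second order is the cohomology class of the quadratic coefficients; chasing the cocycle relation $b_{ik}=b_{ij}+a_{ij}b_{jk}$ identifies this first Ueda obstruction class $u_{1}(D,X)\in H^{1}(D,N_{D/X}^{*})\cong H^{1}(C,L)$ with $\xi$ up to a non-zero constant. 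As $\xi\neq0$ this obstruction does not vanish, so $(D,X)$ is of finite type. Theorem~\ref{main_theorem} then yields at once that $|f_D|^{-2}$ has minimal singularities and that $\mathcal{O}_X(D)$ is nef but carries no smooth Hermitian metric with semi-positive curvature.

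Finally, the hypothesis $H^{1}(C,L)\neq0$ enters exactly to ensure the existence of a non-split extension, i.e.\ of a non-zero $\xi$: a split extension would give $X\cong\mathbb{P}(L\oplus\mathcal{O}_C)$, for which $D$ has a product-like (flat) neighborhood and $\mathcal{O}_X(D)$ does carry a smooth semi-positive metric. When $C$ has genus $g\geq2$, Riemann--Roch and Serre duality give $h^{1}(C,L)=h^{0}(C,K_C\otimes L^{-1})=g-1+h^{0}(C,L)\geq g-1>0$ for every topologically trivial $L$, which accounts for the example mentioned in the statement.
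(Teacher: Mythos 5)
Your proposal follows essentially the same route as the paper: the paper's proof (Example \ref{gen_dps_eg}) likewise computes $N_{D/X}\cong(\pi|_D)^*L^{-1}$, identifies the first Ueda class $u_1(D,X)$ with the extension class $\xi\in H^1(C,L)$ (citing Neeman's Proposition 7.6 where you sketch the transition-matrix computation by hand), concludes ${\rm type}\,(D,X)=1<\infty$, and applies Theorem \ref{main_theorem}. Your cocycle calculation is the content of that cited proposition (modulo normalizing the linear parts to the flat unitary transition constants required in the definition of a system of type $1$), so the argument is correct and matches the paper.
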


The example in \cite[1.7]{DPS94} is a special case of Example \ref{gen_dps_eg}, in which $C$ is an elliptic curve and $L=\mathcal{O}_C$. 
We remark that they give a minimal singular metric of $\mathcal{O}_X(D)$ for this case by determining all singular Hermitian metrics on this line bundle with semi-positive curvature. 
Our proof is based on a completely different idea. 

We are also interested in minimal singular metrics of strictly nef and non semi-ample line bundles, where we say a line bundle $L$ on a variety $X$ is {\it strictly nef} if the intersection number $(L. C)$ is positive for all compact curve $C\subset X$. 
The following examples are strictly nef and non semi-ample line bundles. 
Example \ref{fujino_eg} (1) is so-called Mumford's example, which admits a smooth Hermitian metric with semi-positive curvature. 
Example \ref{fujino_eg} (2) is given by Fujino. 
He proposed a question whether this example  admits a smooth Hermitian metric with semi-positive curvature \cite[5.10]{F}. 

\begin{example}\label{fujino_eg}\ \\
\indent
(1) ({\cite[10.6]{H}}) Let $\widetilde{C}$ be a smooth compact curve of genus greater than $1$. 
Mumford showed that there exists a rank-two vector bundle $F$ on $\widetilde{C}$ such that its degree is equal to zero and its symmetric powers $S^m(F)$ are stable for all $m\geq 1$. 
Let $Y$ be the total space of a projective space bundle $\mathbb{P}(F)$ and $L_Y$ be the relative hyperplane bundle $\mathcal{O}_Y(1)$. 
Then $L_Y$ is a strictly nef and non semi-ample line bundle. 
In this situation, we can construct a smooth Hermitian metric $h_{L_Y}$ on $L_Y$ with semi-positive curvature as a induced metric from the unitary-flat metric of the stable bundle $F$ (see Remark \ref{mumford_rmk}). 

(2) (A variant of {\cite[5.9]{F}}.) 
Fix a smooth compact curve $\widetilde{C}$ of genus greater than $1$. 
Since $H^1(\widetilde{C}, \mathcal{O}_{\widetilde{C}})\not=0$, we can take a non-zero element $\xi\in H^1(\widetilde{C}, \mathcal{O}_{\widetilde{C}})$. 
Let $E$ be the rank-two vector bundle on $\widetilde{C}$ which appears in the non-splitting exact sequence 
$0\to \mathcal{O}_{\widetilde{C}}\to E\to\mathcal{O}_{\widetilde{C}}\to 0$ corresponding to the class $\xi$. 
Denote by $\widetilde{X}$ the rational surface $\mathbb{P}(E)$ over $\widetilde{C}$, by $\widetilde{D}$ the section of the map $\mathbb{P}(E)\to\mathcal{O}_{\widetilde{C}}$, 
and by $\widetilde{Y}$ the fiber product $\widetilde{X}\times_{\widetilde{C}}Y$, 
where $Y$ is that in Example \ref{fujino_eg} (1). 
\[\xymatrix{
\widetilde{Y}=\widetilde{X}\times_{\widetilde{C}}Y\ar[r]^{p_1} \ar[d]^{p_2}  &\widetilde{X}=\mathbb{P}(E) \ar[d]\\
Y \ar[r] &\widetilde{C}     \\
}\]
In this situation, it can be shown that the line bundle $\widetilde{L}:=\mathcal{O}_{\widetilde{Y}}(\widetilde{D}\times_{\widetilde{C}}Y)\otimes p_2^*L_Y$ is a strictly nef and non semi-ample line bundle, where $p_2\colon\widetilde{Y}\to Y$ is the second projection and $L_Y$ is that in Example \ref{fujino_eg} (1). 
\end{example}

We also determine a minimal singular metric of $\widetilde{L}$ in Example \ref{fujino_eg} (2),
and in particular give the answer to the above Fujino's question. 

\begin{corollary}\label{fujino_answer}
Let $\widetilde{L}=\mathcal{O}_{\widetilde{Y}}(\widetilde{D}\times_{\widetilde{C}}Y)\otimes p_2^*L_Y$ be that in Example \ref{fujino_eg} (2), 
$f$ a global section of $\mathcal{O}_{\widetilde{Y}}(\widetilde{D}\times_{\widetilde{C}}Y)$ whose zero divisor is $\widetilde{D}\times_{\widetilde{C}}Y$, 
and $h_{L_Y}$ be a smooth Hermitian metric on $L_Y$ with semi-positive curvature (see Remark \ref{mumford_rmk} for the existence of such a metric on $L_Y$). 
Then the metric $|f|^{-2}\otimes(p_2^*h_{L_Y})$ is a minimal singular metric of $\widetilde{L}$. 
In particular, $\widetilde{L}$ is {\rm not} semi-positive. 
\end{corollary}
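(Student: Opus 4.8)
The plan is to reduce the assertion to Theorem~\ref{main_theorem}, in the form of Corollary~\ref{gen_dps}, for the pair $(\widetilde D,\widetilde X)$, using the fibre-product structure of $\widetilde Y$. Write $D':=\widetilde D\times_{\widetilde C}Y$ and fix $f_{\widetilde D}\in H^0(\widetilde X,\mathcal O_{\widetilde X}(\widetilde D))$ with zero divisor $\widetilde D$. Since $p_1\colon\widetilde Y\to\widetilde X$ is a $\mathbb P^1$-bundle with $D'=p_1^{-1}(\widetilde D)$, one has $\mathcal O_{\widetilde Y}(D')=p_1^*\mathcal O_{\widetilde X}(\widetilde D)$ and may take $f=p_1^*f_{\widetilde D}$, so
\[
\widetilde L=p_1^*\mathcal O_{\widetilde X}(\widetilde D)\otimes p_2^*L_Y,\qquad h_0:=|f|^{-2}\otimes p_2^*h_{L_Y}=p_1^*\big(|f_{\widetilde D}|^{-2}\big)\otimes p_2^*h_{L_Y}.
\]
Corollary~\ref{gen_dps} (applied with $C=\widetilde C$, $L=\mathcal O_{\widetilde C}$) tells us $(\widetilde D^2)=0$, that $(\widetilde D,\widetilde X)$ is of finite type, and that $|f_{\widetilde D}|^{-2}$ has minimal singularities on $\mathcal O_{\widetilde X}(\widetilde D)$. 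Also $F$ is stable of degree $0$, hence unitary flat (Narasimhan--Seshadri), so $h_{L_Y}$ is smooth with $\Theta_{h_{L_Y}}\ge 0$ and, over every small disc $U\subset\widetilde C$, is the pull-back of the Fubini--Study metric under a trivialisation $Y|_U\cong U\times\mathbb P^1$. In particular the curvature current of $h_0$ equals $[D']+p_2^*\Theta_{h_{L_Y}}\ge 0$, so $h_0$ has plurisubharmonic local weights; thus $\widetilde L$ is pseudo-effective, a minimal singular metric $h_{\min}$ exists, and $h_{\min}$ is no more singular than $h_0$.

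It remains to prove the reverse, namely that every singular Hermitian metric $h$ on $\widetilde L$ with plurisubharmonic local weights (curvature current $T_h\ge 0$) is at least as singular as $h_0$. Off $D'$ the weight $\varphi_{h_0}$ is smooth, so by compactness of $\widetilde Y$ this is automatic there; and near $D'$, in a local frame $e_1\otimes e_2$ of $\widetilde L=\mathcal O_{\widetilde Y}(D')\otimes p_2^*L_Y$ with $f=t\cdot e_1$ for a local equation $t$ of $D'$, one has $\varphi_{h_0}=\log|t|^2+\phi'$ with $\phi'$ the \emph{smooth} weight of $p_2^*h_{L_Y}$ in the frame $e_2$. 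So what must be shown is $\varphi_h\le\log|t|^2+O(1)$ near $D'$; equivalently, setting $\Phi:=\varphi_h-\log|t|^2$ (which is plurisubharmonic on $\{t\ne 0\}$, where $dd^c\Phi=T_h\ge 0$, while $\Phi+\log|t|^2=\varphi_h$ is plurisubharmonic across $D'$), that $\Phi$ is bounded above near $D'$. The twist by $p_2^*L_Y$ has thus dropped out of the estimate, which is exactly the statement proved, for a smooth hypersurface carrying a topologically trivial normal bundle and of finite type, in Theorem~\ref{main_theorem}.

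Hence it suffices to verify that $(D',\widetilde Y)$ is of finite type and to run the argument of Theorem~\ref{main_theorem} for it. The normal bundle is $N_{D'/\widetilde Y}=p_1^*\mathcal O_{\widetilde X}(\widetilde D)|_{D'}=(p_1|_{D'})^*N_{\widetilde D/\widetilde X}=\mathcal O_{D'}$ (as $N_{\widetilde D/\widetilde X}=\mathcal O_{\widetilde D}$). A tubular neighbourhood of $D'$ is $V\times_{\widetilde C}Y$ for $V\ni\widetilde D$ a tubular neighbourhood in $\widetilde X$; trivialising $Y|_U\cong U\times\mathbb P^1$ over small discs, the transition functions of this neighbourhood in the normal direction to $D'$ coincide with those of $V$ in the normal direction to $\widetilde D$ (the fibre-direction transitions act only on the $\mathbb P^1$-coordinate), so Ueda's obstruction classes $u_n(D',\widetilde Y)\in H^1(D',N_{D'/\widetilde Y}^{-n})=H^1(Y,\mathcal O_Y)$ are the images of $u_n(\widetilde D,\widetilde X)\in H^1(\widetilde C,\mathcal O_{\widetilde C})$ under the isomorphism $\pi_Y^*\colon H^1(\widetilde C,\mathcal O_{\widetilde C})\xrightarrow{\sim}H^1(Y,\mathcal O_Y)$ (here $\pi_Y\colon Y\to\widetilde C$). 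Since $(\widetilde D,\widetilde X)$ is of finite type, so is $(D',\widetilde Y)$, and the Ueda-theoretic argument behind Theorem~\ref{main_theorem} — which uses only topological triviality of the normal bundle together with the finite-type condition — gives $\Phi$ bounded above near $D'$. Therefore $\varphi_h\le\varphi_{h_0}+O(1)$, so $h_0$ has minimal singularities; and since $\varphi_{h_0}\to-\infty$ along $D'$, no smooth Hermitian metric on $\widetilde L$ can have semi-positive curvature, i.e.\ $\widetilde L$ is not semi-positive.

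The step I expect to be the main obstacle is the last one: Theorem~\ref{main_theorem} is stated for a curve on a smooth surface, and one must check that the Ueda-type estimate in its proof really does go through for a smooth hypersurface on the threefold $\widetilde Y$ with topologically trivial normal bundle, together with the identification $u_n(D',\widetilde Y)=\pi_Y^*u_n(\widetilde D,\widetilde X)$ of the obstruction classes via the fibre-product structure. Both should be routine — the normal expansion in the variable $t$ and the finite-type obstruction are dimension-agnostic, and the rigidity of the neighbourhood $V\times_{\widetilde C}Y$ of $D'$ (and the flatness of $F$, used in the first paragraph to make $h_0$ plurisubharmonic) supply exactly what is needed — but this is where the genuine verification lies.
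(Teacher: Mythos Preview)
Your approach differs from the paper's. The paper proves the corollary via a more general result (Theorem~\ref{countereg}): given any metric $h_{\widetilde L}$ on $\widetilde L$ with semi-positive curvature, one writes $h_{\widetilde L}=p_1^*h_\infty\cdot p_2^*h_{L_Y}\cdot e^{-\chi}$ for a smooth reference metric $h_\infty$ on $L_{\widetilde X}=\mathcal O_{\widetilde X}(\widetilde D)$, and then takes the \emph{fibre-wise maximum} $\widetilde\chi(z,x):=\max_{w\in p^{-1}(x)}\chi(z,w,x)$ over the $\mathbb P^1$-fibres of $p_1$. This $\widetilde\chi$ is $\varphi_\infty$-psh on the surface $\widetilde X$, so $h_\infty e^{-\widetilde\chi}$ is a semi-positive singular metric on $\mathcal O_{\widetilde X}(\widetilde D)$; now the minimality of $|f_{\widetilde D}|^{-2}$ (Corollary~\ref{gen_dps}) gives the required inequality, which then pulls back to $\widetilde Y$. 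The whole problem is thus pushed down to the surface, where Ueda's Theorem~\ref{ueda_theorem} (stated only for curves in surfaces) applies directly; no higher-dimensional Ueda theory is needed.

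Your route, by contrast, attempts to run Ueda's argument directly on the threefold, and this has a genuine gap beyond the dimension issue you flag. The quantity $\Phi=\varphi_h-\log|t|^2$ is \emph{not a globally defined function} on a neighbourhood of $D'$: since $\varphi_h$ is a local weight of a metric on $\widetilde L=\mathcal O(D')\otimes p_2^*L_Y$ while $\log|t|^2$ is a local weight on $\mathcal O(D')$, their difference transforms as a local weight on $p_2^*L_Y$, not as a scalar. (In the surface situation of Lemma~\ref{ueda_lemma} this problem is absent precisely because there is no twisting factor: $\Psi=-\log|f_C|_h^2$ is a global function.) Ueda's Theorem~\ref{ueda_theorem} takes as input a global psh function on $V\setminus D'$, so you cannot feed $\Phi$ into it. The globally defined alternative $\varphi_h-\varphi_{h_0}$ is not psh, because $h_0$ carries the semi-positive current $p_2^*\Theta_{h_{L_Y}}$ in its curvature; and you cannot instead use $-\log|s|_h^2$ for a section $s$ of $\widetilde L$, since $H^0(Y,L_Y)=H^0(\widetilde C,F^*)=0$ by stability of $F^*$. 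The paper's fibre-wise-maximum trick is exactly the device that circumvents this obstruction.
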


We will prove this corollary for more general situation (see Theorem \ref{countereg}). 

The organization of the paper is as follows. 
In \S2, we define some concepts on singular Hermitian metrics over possibly non-compact complex manifolds. 
In \S3, we review Ueda theory, prove Theorem \ref{main_theorem}, and apply it on some examples. 
In \S4, we prove Corollary \ref{fujino_answer} in more general form (Theorem \ref{countereg}). 

\vskip3mm
{\bf Acknowledgment. } 
The author would like to thank his supervisor Prof. Shigeharu Takayama whose enormous support and insightful comments were invaluable during the course of his study. 
He is supported by the Grant-in-Aid for Scientific Research (KAK-
ENHI No.25-2869) and the Grant-in-Aid for JSPS fellows. 
This work is supported by the Program for Leading Graduate
Schools, MEXT, Japan. 

\section{Line bundles on possibly non-compact complex manifolds}
In this section, we define some concepts on singular Hermitian metrics over possibly non-compact complex manifolds. 
Let $X$ be a complex manifold and $L$ be a line bundle on $X$ (In this paper, ``line bundle" always stands for a holomorphic line bundle). 
Let $h$ be a singular Hermitian metric of $L$ (for the definition of the singular Hermitian metric, see \cite[3.12]{D}). 
Then, for each local trivialization of $L$ on an open set of $X$, 
``the inner product" defined by $h$ can be written as
\[
\langle\xi, \eta\rangle_z=e^{-\psi(z)}\xi\overline{\eta}
\]
where $z$ is a point in the open set, $\xi$ and $\eta$ are points in $\mathbb{C}$, which we regard as the $z$-fiber of $L$, 
and $\psi$ is a locally integrable function defined on the open set, which we call the {\it local weight} of $h$. 
Here we remark that it is known that the local currents $\sqrt{-1}\partial\overline{\partial} \psi$ glue together to define the curvature current associated to $h$. 
We denote it by $\sqrt{-1}\Theta_h$. 

Next, let us recall how to compare the singularity of two psh functions. 

\begin{definition}\label{sim_sing} (\cite[1.4]{DPS00})
Let $\varphi$ and $\psi$ be psh functions defined on a neighborhood of $x\in X$. 
We say $\psi$ {\it is less singular than} $\varphi$ and write $\varphi\prec_{\rm sing}\psi$ at $x$ when there exists a constant $C$ such that the inequality $\varphi\leq \psi+C$ holds for each point sufficiently near to $x$. 
We denote $\varphi\sim_{\rm sing}\psi$ at $x$ if $\varphi\prec_{\rm sing}\psi$ and $\varphi\succ_{\rm sing}\psi$ holds at $x$. 
\end{definition}

We define the minimal singular metric as follows. 

\begin{definition}
Let $h_{\rm min}$ be a singular Hermitian metric of $L$ which satisfies $\sqrt{-1}\Theta_{h_{\rm min}}\geq 0$. 
We call $h_{\rm min}$ a {\it minimal singular metric} 
if $\psi \prec_{\rm sing}\varphi_{\rm min}$ holds at any point $x\in X$ 
for all singular Hermitian metric $h$ satisfying $\sqrt{-1}\Theta_h\geq 0$, 
where $\varphi_{\rm min}$ and $\psi$ stands for the local weight functions of $h_{\rm min}$ and $h$, respectively. 
\end{definition}

It is known that every pseudo-effective line bundles on compact complex manifolds admit minimal singular metrics \cite[1.5]{DPS00}. 
We remark that, though the minimal singular metric is not unique, but it is unique up to the relation $\sim_{\rm sing}$ if it exists. 

\section{Ueda theory and Proof of Theorem \ref{main_theorem}}

First, let us review Ueda theory along \cite[\S2]{U} for a smooth complex surface $X$ and an embedded smooth compact complex curve $C\subset X$ with $(C^2)=0$. 
For the normal bundle $N_{C/X}$ is a topologically trivial line bundle on a compact K\"ahler manifold, it admits a flat structure; i.e. 
\[N_{C/X}=\{(U_j\cap U_k, t_{jk})\}_{jk}\]
holds in $H^1(C, \mathcal{O}_C^*)$ for a sufficiently fine open covering $\{U_j\}_j$ and some constants $t_{jk}\in U(1):=\{t\in\mathbb{C}\mid |t|=1\}$. 
Let $V$ be a sufficiently small tubular neighborhood of $C$ in $X$ and $\{V_j\}_j$ be a sufficiently fine open covering of $V$. Without loss of generality, we may assume that the index sets of $\{U_j\}_j$ and $\{V_j\}_j$ coincide and $V_j\cap C=U_j$ holds. 
We choose local coordinates $(z_j, x_j)$ of $V_j$ satisfying conditions that $x_j$ is a coordinate of $U_j$, $\{z_j=0\}=U_j$ holds on $V_j$, and that $z_k/z_j\equiv t_{jk}$ holds on $U_j\cap U_k$ for all $j$ and $k$. 
Let $n$ be a positive integer. 
We call $\{(V_j, z_j, x_j)\}_j$ {\it a system of type $n$} if ${\rm mult}_{U_j\cap U_k}(t_{jk}z_j-z_k)\geq n+1$ holds on each $V_j\cap V_k$. When there exists a system $\{(V_j, z_j, x_j)\}_j$ of type $n$, the Taylor expansion of $t_{jk}z_j$ for the variable $z_k$ on $V_j\cap V_k$ can be written in the form 
\[
t_{jk}z_j=z_k+f_{jk}(x_k)z_k^{n+1}+\cdots
\]
for some holomorphic function $f_{jk}$ defined on $U_j\cap U_k$. 
Here we remark that, for all $m>n$, a system $\{(V_j, z_j, x_j)\}_j$ of type $m$ is also a system of type $n$ and in this case the above $f_{jk}$ is the constant function $0$. 
It is known that $\{(U_j\cap U_k, f_{jk}|_{U_j\cap U_k})\}_{jk}$ satisfies the cocycle condition \cite[p. 588]{U}. 

\begin{definition}
Suppose that there exists a system of type $n$. 
Then the cohomology class 
\[
u_n(C, X):=\{(U_j\cap U_k, f_{jk}|_{U_j\cap U_k})\}_{jk}\in H^1(C, N_{C/X}^{-n})
\]
is called the $n$-th Ueda class of the pair $(C, X)$. 
\end{definition}

The $n$-th Ueda class does not depend on the choice of local coordinates system (\cite[1.3]{N}). 
It is known that $u_n(C, X)=0$ if and only if there exists a system of type $n+1$. 
Thus only one phenomenon of the following occurs. 
\begin{description}
\item[(1)] There exists an integer $n\in\mathbb{Z}_{>0}$ such that $u_m(C, X)$ can be defined only when $m\leq n$, $u_m(C, X)=0$ holds for all $m<n$, and $u_n(C, X)\not=0$ holds. 
\item[(2)] For every integer $n\in\mathbb{Z}_{>0}$, $u_n(C, X)$ can be defined and it is equal to zero. 
\end{description}

\begin{definition}[{\cite[p. 589]{U}}]\label{type}
We denote ${\rm type}\,(C, X)=n$ and say that the pair $(C, X)$ is of finite type when (1) above occurs.  
In the other case, we denote ${\rm type}\,(C, X)=\infty$ and say that the pair $(C, X)$ is of infinite type. 
\end{definition}

\begin{theorem}[{\cite[Theorem 2]{U}}]\label{ueda_theorem} 
Let V be a neighborhood of $C$ in $X$ and $\Psi$ be a psh
function on $V\setminus C$. 
Assume that ${\rm type}\,(C, X)=n$ holds and that there exists a positive real number $a<n$ such that $\Psi (p)=o({\rm dist}\,(p, C)^{-a})$ as p approaches $C$, where ${\rm dist}\,(p, C)$ is the Euclidean distance between $p$ and $C$.
Then there exists a neighborhood $V_0$ of $C$
such that $\Psi$ is constant on $V_0\setminus C$. 
\end{theorem}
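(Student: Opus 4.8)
The plan is to prove constancy through a fibre-wise Fourier analysis of $\Psi$ along the circle-bundle structure that the flat unitary normal bundle imposes on a tubular neighbourhood, using subharmonicity to control the radial growth of each mode and the nonvanishing of $u_n(C,X)$ to annihilate the single resonant mode that the growth hypothesis alone cannot reach.

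First I would fix a system $\{(V_j, z_j, x_j)\}$ of type $n$, as in the discussion preceding Definition \ref{type}, so that $t_{jk}z_j = z_k + f_{jk}(x_k)z_k^{n+1} + \cdots$ with $\{f_{jk}\}$ representing $0 \neq u_n(C,X) \in H^1(C, N_{C/X}^{-n})$. Since the leading transition $z_k \approx t_{jk}z_j$ is unitary, the modulus $|z_j|$ is globally defined up to higher-order error and satisfies $|z_j| \asymp \mathrm{dist}(p,C)$ on a smaller neighbourhood; this lets me rephrase the hypothesis as $\Psi = o(|z_j|^{-a})$. The level sets $\{|z_j| = r\}$ then form, for small $r$, a flat unitary $S^1$-bundle over $C$, and I would expand $\Psi$ fibre-wise into Fourier modes $\Psi = \sum_\nu \Psi_\nu$ in the angle $\arg z_j$. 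Because the transition constants lie in $U(1)$, the coefficient of the $\nu$-th mode transforms, to leading order, as a section of $N_{C/X}^{\mp\nu}$; the deviation from this exact behaviour is of order $|z_j|^n$ and couples the $\nu$-mode to the $(\nu \pm n)$-modes, and this coupling is exactly what $u_n(C,X)$ records.

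Next I would exploit subharmonicity. Restricting $\Psi$ to a normal disc $\{x = \mathrm{const}\}$ yields a subharmonic function of the single variable $z_j$ on a punctured disc, so the modulus of each Fourier coefficient is controlled by the two harmonic solutions $r^{|\nu|}$ and $r^{-|\nu|}$ of the associated radial equation. The growth bound $\Psi = o(|z_j|^{-a})$ forbids the singular solution $r^{-|\nu|}$ whenever $|\nu| > a$; since $a < n$ this disposes of all modes of order $\geq n$ and reduces the possible singular part of $\Psi$ to a harmonic expression built from modes of order $< n$. The zeroth mode is handled separately: its fibre average is convex and monotone in $\log r$ and, being pinched by the growth bound, converges to a finite limit as $r \to 0$.

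The main obstacle, and the step where the hypothesis $\mathrm{type}\,(C,X) = n$ is indispensable, is to show that the surviving low-order singular part must vanish. Here I would feed the order-$n$ coupling from the second paragraph back in: if $\Psi$ carried a nontrivial singular part compatible with all the gluing constraints, then solving the resulting transition relations order by order would produce a global solution to the coboundary equation for $\{f_{jk}\}$, that is, would trivialise $u_n(C,X)$, contradicting $u_n(C,X) \neq 0$. Hence the singular part is zero, $\Psi$ is bounded above near $C$, and therefore extends as a psh function across the compact analytic set $C$ to a neighbourhood $V_0$. Finally, since $C$ lies in the interior of $V_0$, the extended $\Psi$ attains its maximum at an interior point; the sub-mean-value inequality then forces $\Psi$ to equal that maximum on a subset of $V_0$ that is both open and closed, so on the connected $V_0$ the function $\Psi$ is constant, and in particular $\Psi$ is constant on $V_0 \setminus C$.
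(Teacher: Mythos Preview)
The paper does not give its own proof of this statement; it is quoted from \cite[Theorem~2]{U} and invoked as a black box in Lemma~\ref{ueda_lemma}, so there is nothing in the paper to compare your argument against beyond the citation itself.

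On its own merits your outline has a genuine gap at the end. You argue that once the singular part of $\Psi$ is shown to vanish, $\Psi$ is bounded above near $C$ and hence extends to a psh function on an open neighbourhood $V_0$ of $C$, and then that ``since $C$ lies in the interior of $V_0$, the extended $\Psi$ attains its maximum at an interior point,'' whence constancy by the strong maximum principle. But $V_0$ is open and noncompact, and a bounded psh function there need not attain its supremum at all, let alone in the interior; already the local model $\mathrm{Re}\,z_j$ is pluriharmonic, bounded on a small tube, extends across $\{z_j=0\}$, and is not constant. Boundedness plus removability across $C$ is strictly weaker than the conclusion you need, and nothing in your last paragraph uses the global hypothesis $u_n(C,X)\neq 0$ any longer. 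There are softer gaps earlier as well: the assertion that the zeroth Fourier mode ``converges to a finite limit as $r\to 0$'' does not follow from convexity in $\log r$ together with $o(r^{-a})$, since $-\log r$ satisfies both and diverges; and the control of the nonzero modes by the pair $r^{\pm|\nu|}$ is a fact about harmonic, not merely subharmonic, functions. In Ueda's actual argument constancy is not obtained by a bare maximum principle but by building, out of a \v{C}ech representative of the nonzero class $u_n(C,X)$, an auxiliary weight on $V\setminus C$ whose Levi form has a definite sign and whose blow-up along $C$ is exactly of order $n$, and then running a comparison against $\Psi$; that comparison is where $a<n$ enters decisively, and it is the missing ingredient in your sketch.
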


Now we prove Theorem \ref{main_theorem} as an application of Theorem \ref{ueda_theorem}. 

\begin{lemma}\label{ueda_lemma}
Let $X$ be a smooth complex surface, $C\subset X$ an embedded smooth compact complex curve, $f_C\in H^0(X, \mathcal{O}_X(C))$ a section whose zero divisor is $C$, and $h$ be a singular Hermitian metric of $\mathcal{O}_X(C)$ with semi-positive curvature. 
Assume $(C^2)=0$, ${\rm type}\,(C, X)<\infty$, 
and that the local weight functions of $h$ are less singular than the psh function $\log |f_C|^2$. 
Then there exists a positive number $M$ such that $h=M|f_C|^{-2}$ holds on a neighborhood of $C$ in $X$. 
\end{lemma}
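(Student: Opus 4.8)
The plan is to compare $h$ with the reference metric $|f_C|^{-2}$ and show that their ratio, which a priori is only a global function on $X\setminus C$, extends to a constant near $C$ by invoking Ueda's Theorem \ref{ueda_theorem}. First I would form the local weight $\varphi$ of $h$ on a tubular neighborhood $V$ and consider the function $\Psi:=\varphi-\log|f_C|^2$ on $V\setminus C$. Since $\sqrt{-1}\Theta_h\geq 0$ and $\log|f_C|^2$ is pluriharmonic on $V\setminus C$ (as $f_C$ is a nonvanishing holomorphic function there once we fix the trivialization of $\mathcal O_X(C)$ off its zero locus $C$), the function $\Psi$ is plurisubharmonic on $V\setminus C$. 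The hypothesis that the local weights of $h$ are \emph{less singular} than $\log|f_C|^2$ — i.e. $\log|f_C|^2\prec_{\mathrm{sing}}\varphi$ — gives an upper bound $\Psi\leq C_0$ near $C$, so in particular $\Psi$ is bounded above; this is a much stronger decay than the $o(\mathrm{dist}(p,C)^{-a})$ growth condition required in Theorem \ref{ueda_theorem} for any $a\in(0,n)$ with $n={\rm type}\,(C,X)$.

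Applying Theorem \ref{ueda_theorem} to $\Psi$ (with, say, any fixed $a<n$, since $\Psi$ bounded above is certainly $o(\mathrm{dist}(p,C)^{-a})$) yields a possibly smaller neighborhood $V_0$ of $C$ on which $\Psi$ is constant, say $\Psi\equiv\log M$ for some real constant. Unwinding the definition, on $V_0\setminus C$ we have $\varphi=\log|f_C|^2+\log M$, which reads exactly as $h=M|f_C|^{-2}$ on $V_0\setminus C$ in the chosen trivialization. Since $M|f_C|^{-2}$ and $h$ are both honest (singular) metrics on the line bundle $\mathcal O_X(C)$ over all of $V_0$, and they agree on the dense open set $V_0\setminus C$, they agree on $V_0$; this gives the asserted equality on a neighborhood of $C$. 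Finally $M>0$ because $h$ is a metric (its local weights are genuine functions, so $\Psi=\varphi-\log|f_C|^2$ takes a finite real value at any point of $V_0\setminus C$, forcing $M=e^{\Psi}>0$).

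The only subtle point is checking that $\Psi$ is genuinely plurisubharmonic on $V_0\setminus C$ rather than merely a difference of an $L^1_{\mathrm{loc}}$ function and a pluriharmonic one: this is where one uses that $\varphi$ is a local weight of a metric with $\sqrt{-1}\Theta_h\geq 0$, hence is (equal almost everywhere to) a psh function, and that subtracting the pluriharmonic function $\log|f_C|^2$ preserves plurisubharmonicity on $V_0\setminus C$ where $f_C\neq 0$. Once this is in place the rest is a direct invocation of Ueda's theorem, so I expect no serious obstacle; the main thing to be careful about is simply that the boundedness from above of $\Psi$ — the translation of ``$h$ less singular than $|f_C|^{-2}$'' — is what feeds into Theorem \ref{ueda_theorem}, and that the constant produced is positive.
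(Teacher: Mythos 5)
Your overall strategy is the same as the paper's (form $\Psi=\varphi-\log|f_C|^2$ on $V\setminus C$, check it is psh, verify the growth hypothesis of Theorem \ref{ueda_theorem}, conclude $\Psi$ is constant), and the plurisubharmonicity step and the final unwinding are fine. But the key step --- verifying the growth condition --- is wrong as written, and the error is not cosmetic.

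You claim that the hypothesis $\log|f_C|^2\prec_{\rm sing}\varphi$ ``gives an upper bound $\Psi\leq C_0$ near $C$.'' It gives the opposite: by Definition \ref{sim_sing}, $\log|f_C|^2\prec_{\rm sing}\varphi$ means $\log|f_C|^2\leq\varphi+C$, i.e.\ $\Psi=\varphi-\log|f_C|^2\geq -C$, a \emph{lower} bound. An upper bound for $\Psi$ cannot follow from the hypothesis alone: if $\varphi$ were, say, continuous near $C$, then $\Psi=\varphi-\log|z|^2\to+\infty$ as $z\to 0$. Indeed, ``$\Psi$ bounded above near $C$'' is essentially equivalent to the conclusion of the lemma (that $\varphi$ carries the full logarithmic singularity along $C$), so you are assuming the hard part of what is to be proved. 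The correct verification has two halves: the hypothesis supplies the lower bound $\Psi\geq -C$, while the upper bound comes from the fact that the psh function $\varphi$ is locally bounded above, so that in local coordinates with $f_C=z$ one has $\Psi\leq C'-\log|z|^2=o(|z|^{-a})$ for any $a>0$; together these give $\Psi(p)=o(\mathrm{dist}(p,C)^{-a})$ for any fixed $a<n$, which is exactly what Theorem \ref{ueda_theorem} requires (note that the $o(\cdot)$ condition is two-sided, so the lower bound is genuinely needed and your argument never establishes it). With this correction the rest of your argument goes through; also note the harmless sign slip at the end, where $\Psi\equiv c$ yields $M=e^{-c}$ rather than $e^{c}$ in the convention $|\cdot|_h^2=e^{-\varphi}|\cdot|^2$.
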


\begin{proof}
Let us fix a sufficiently small neighborhood $V$ of $C$ in $X$ and consider a function 
$
\Psi:=-\log |f_C|_h^2
$
defined on $V\setminus C$. 
In the following, we sometimes restrict our selves to a sufficiently small open neighborhood of each point of $V$. 
On the open neighborhood, we fix a local trivialization of $\mathcal{O}_X(C)$, and by using a local weight function $\varphi$ of $h$, 
we denote $|f_C|_h^2$ locally by $|f_C|^2e^{-\varphi}$, where we regard $f_C$ as a locally defined holomorphic function corresponding to the section $f_C$. 

First we check that this function is psh. 
By using the above notation, it is clear that the equation $\Psi=\varphi-\log |f_C|^2$ holds locally. Since $\log |f_C|^2$ is harmonic on $V\setminus C$ and $\varphi$ is psh by assumption, we can conclude that $\Psi$ is a psh function. 

Next, we evaluate the divergence of $\Psi$ near $C$. 
Since $\log|f_C|^2\prec_{\rm sing}\varphi$, $\Psi$ is bounded from below. 
Thus all we have to do is to evaluate the divergence of $\Psi$ to $+\infty$. 
We use local coordinates system $(z, x)$ of $V$ such that $x$ is a local coordinate of $C$ and the equation $\{z=0\}=C$ holds locally. 
Without loss of generality, we may assume $f_C(z, x)=z$ holds on this locus. 
For $\varphi$ is a psh function, it is locally bounded from above. 
Thus 
\[
\Psi(z, x)=\varphi(z, x)-\log |z|^2\leq C-\log |z|^2= o(1/|z|^{1/2})\ \ {\rm as}\ |z|\to 0. 
\]
holds. 
Therefore we can apply \cite[Theorem 2]{U} and thus, after making the neighborhood $V$ smaller, we can conclude that $\Psi$ is a constant map, which shows the lemma. 
\end{proof}

\begin{proof}[Proof of Theorem \ref{main_theorem}]
Let $h$ be a smooth Hermitian metric of $\mathcal{O}_X(C)$ with semi-positive curvature. 
We denote by $\widetilde{h}$ the new metric $\min\{h, |f_C|^{-2}\}$. 
Clearly we obtain that the local weight function of $\widetilde{h}$ is $\max\{\log|f_C|^2, \varphi\}$, where $\varphi$ is the local weight function of $h$.  Since this function is the maximum of two psh functions, $\widetilde{h}$ also has the semi-positive curvature. 
For $\log |f_C|^2\prec_{\rm sing}\max\{\log|f_C|^2, \varphi\}$, we can apply Lemma \ref{ueda_lemma} and thus we obtain that $\log |f_C|^2\sim_{\rm sing}\max\{\log|f_C|^2, \varphi\}\succ_{\rm sing}\varphi$ holds. 
\end{proof}

In the rest of this section, we give some examples of nef but not semi-positive line bundles over smooth projective varieties as applications of Theorem \ref{main_theorem}. 

\begin{example}\label{gen_dps_eg}
Let $C$ be an smooth projective curve of positive genus and $E$ be the rank-$2$ vector bundle on $C$ appears in the exact sequence 
$0\to L\to E\to\mathcal{O}_C\to 0$, where $L$ is a topologically trivial line bundle on $C$ such that $H^1(C, L)\not=0$. 
Let us define that $X$ is the rational surface $\mathbb{P}(E)$ over $C$ and $D:=\mathbb{P}(\mathcal{O}_C)\subset \mathbb{P}(E)$ be the section of the map $\mathbb{P}(E)\to C$. 
We denote by $\pi\colon X\to C$ the canonical surjection. 
From simple computations, we can conclude that the nomal bundle $N_{D/X}$ is isomorphic to $(\pi|_D)^*L^{-1}$ 
and thus it is topologically trivial. 
Therefore we can define Ueda classes for the pair $(D, X)$. 
In our case, we obtain that the first Ueda class $u_1(D, X)\in H^1(D, N_{D/X}^{-1})$ coincides with cohomology class 
$\{E\}\in{\rm Ext}^1(\mathcal{O}_C, L)=H^1(C, L)$ via the isomorphism $\pi|_D\colon D\to C$ (see \cite[Proposition 7.6]{N}). 
Thus the first Ueda class $u_1(D, X)$ vanishes if and only if $E=\mathcal{O}_C\oplus L$, and in this case, 
the line bundle $\mathcal{O}_X(D)$ is semi-positive. 

Assume that the exact sequence 
$0\to L\to E\to\mathcal{O}_C\to 0$ 
is non-splitting. 
In this case, the first Ueda class $u_1(D, X)$ does not vanish and thus ${\rm type}\,(D, X)=1<\infty$ and we can apply Theorem \ref{main_theorem}. 
As a result, we can conclude that $|f_{{D}}|^{-2}$ defines a minimal singular metric of $\mathcal{O}_{{X}}({D})$, where $f_{{D}}\in H^0(X, \mathcal{O}_{{X}}({D}))$ is a section whose zero divisor is ${D}$. 

When $C$ is a smooth elliptic curve and $L=\mathcal{O}_C$, the minimal singularity of the above singular Hermitian metric $|f_{{D}}|^{-2}$ is known by 
Demailly, Peternell, and Schneider \cite[1.7]{DPS94}. 
We also remark that, in this case, they more precisely shows that for every singular Hermitian metric $h$ of $\mathcal{O}_X(D)$ with semi-positive curvature, there exists a positive constant $M$ such that $h=M|f_D|^{-2}$ holds. 
\end{example}

\begin{example}\label{neeman_eg}
Let $C$ be a smooth compact curve of genus $2$ and $Y$ be its Jacobian. 
Fix two points $p, q\in C$ conjugate to each other by the hyperelliptic involution. 
We denote by $X$ the blow-up of $Y$ at $p$ and $q$ and by $D$ the the strict transformation of $C$. 
According to Neeman \cite[10.5]{N}, ${\rm type}\,(D, X)=1<\infty$ holds for this $D$ and $X$. 
Therefore we can apply Theorem \ref{main_theorem} and conclude that $|f_{{D}}|^{-2}$ defines a minimal singular metric of $\mathcal{O}_{{X}}({D})$, where $f_{{D}}\in H^0(X, \mathcal{O}_{{X}}({D}))$ is a section whose zero divisor is ${D}$. 
\end{example}

\section{Strictly nef line bundle which admits no smooth Hermitian metric with semi-positive curvature}
In this section, we prove the following 

\begin{theorem}\label{countereg}
Let $\widetilde{C}, Y$, and $L_Y$ be those in Example \ref{fujino_eg} (1), $\widetilde{X}\to\widetilde{C}$ be a smooth holomorphic fiber bundle over $\widetilde{C}$ ($\widetilde{X}$ need not to be that in Example \ref{fujino_eg} (2) and also need not to be compact) 
and $L_{\widetilde{X}}$ is a pseudo-effective line bundle on $\widetilde{X}$. 
Denote by $\widetilde{Y}$ the fiber product $\widetilde{X}\times_{\widetilde{C}}Y$ and 
by $\widetilde{L}$ the line bundle $p_1^*L_{\widetilde{X}}\otimes p_2^*L_Y$, 
where $p_1\colon\widetilde{Y}\to\widetilde{X}$ and $p_2\colon\widetilde{Y}\to Y$ is the first and second projection, respectively. 
Assume that $L_{\widetilde{X}}$ admits a minimal singular metric $h_{L_{\widetilde{X}}}$. 
Then the metric $(p_1^*h_{L_{\widetilde{X}}})\otimes(p_2^*h_{L_Y})$ is a minimal singular metric of $\widetilde{L}$, 
where $h_{L_Y}$ is that in Example \ref{fujino_eg} (1) (see Remark \ref{mumford_rmk}). 
\end{theorem}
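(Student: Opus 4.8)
The plan is to verify that the candidate metric $h := (p_1^*h_{L_{\widetilde{X}}}) \otimes (p_2^*h_{L_Y})$ is indeed a minimal singular metric of $\widetilde{L}$. Since $h_{L_{\widetilde{X}}}$ has semi-positive curvature (being a minimal singular metric of a pseudo-effective bundle) and $h_{L_Y}$ is smooth with semi-positive curvature, the pulled-back curvatures are semi-positive currents, hence so is $\sqrt{-1}\Theta_h$; thus $h$ is a legitimate competitor. It then remains to show that for every singular Hermitian metric $h'$ on $\widetilde{L}$ with $\sqrt{-1}\Theta_{h'} \geq 0$, the local weight of $h'$ is $\prec_{\rm sing}$ the local weight of $h$ at every point of $\widetilde{Y}$. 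The essential point is to exploit the product-like structure: $\widetilde{Y} \to \widetilde{X}$ is the fiber bundle obtained by base change of $Y \to \widetilde{C}$, so restricting to a fiber of $p_1$ over a point $\tilde x \in \widetilde{X}$ gives a copy of $Y_{c}$ (the fiber of $Y \to \widetilde{C}$ over $c = $ image of $\tilde x$), on which $\widetilde{L}$ restricts to $L_Y|_{Y_c}$, which is $\mathcal{O}(1)$ on $\mathbb{P}(F_c)$, i.e. a positive line bundle on $\mathbb{P}^1$.

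The key steps, in order, would be: (1) Reduce to a local statement on a product chart. Over a small polydisc $U \subset \widetilde{X}$, trivialize the fiber bundle so that $p_1^{-1}(U) \cong U \times Y_0$ for a fixed fiber $Y_0 \cong \mathbb{P}(F_c)$, with $\widetilde{L}$ locally the exterior tensor product of $L_{\widetilde{X}}|_U$ and $L_Y|_{Y_0}$. (2) Fix a competitor $h'$ with semi-positive curvature and consider its local weight $\psi'$ on $U \times Y_0$. For each $\tilde x \in U$, restrict $\psi'$ to the slice $\{\tilde x\} \times Y_0$; this restriction is plurisubharmonic (a restriction of a psh function to a submanifold), hence defines a singular metric with semi-positive curvature on $L_Y|_{Y_0}$, which is an ample bundle on the curve $Y_0$. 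Use that the minimal singular metric of an ample line bundle on a compact curve is the smooth positively-curved one (up to $\sim_{\rm sing}$), so on each slice $\psi'(\tilde x, \cdot) \leq (\text{weight of }h_{L_Y}) + C(\tilde x)$. (3) Capture the dependence on $\tilde x$: the ``constant'' $C(\tilde x)$ can be taken to be (up to a harmonic/psh correction coming from $L_{\widetilde{X}}$) an upper semicontinuous function that is, in an appropriate averaged sense, psh in $\tilde x$; comparing with the minimal singular metric $h_{L_{\widetilde{X}}}$ on the base forces $C(\tilde x) \prec_{\rm sing} (\text{weight of }h_{L_{\widetilde{X}}})$. (4) Combine the slice-wise bound with the base-wise bound to conclude $\psi' \prec_{\rm sing} p_1^*(\text{weight of }h_{L_{\widetilde{X}}}) + p_2^*(\text{weight of }h_{L_Y})$, which is exactly the local weight of $h$.

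The main obstacle I expect is step (3): making rigorous the passage from the slice-wise comparison on each fiber $Y_0$ to a global comparison in the base direction. One cannot simply integrate out the fiber, because $\psi'$ need not be locally bounded, and the ``best constant'' function $\tilde x \mapsto C(\tilde x)$ obtained slice-wise may fail to be upper semicontinuous a priori. A robust way around this is to push forward: consider $\varphi(\tilde x) := \sup_{Y_0}\bigl(\psi'(\tilde x, \cdot) - \text{(weight of }h_{L_Y})\bigr)$, or better a suitable Bergman-type / logarithmic average over the fiber, which by the theory of plurisubharmonic variation of fiberwise metrics (Berndtsson-type positivity, or simply the fact that a sup over a compact set of a psh function in parameters is psh after usc regularization) is a psh function of $\tilde x$; then compare this psh function downstairs against the minimal singular metric $h_{L_{\widetilde{X}}}$. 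Handling the general smooth holomorphic fiber bundle $\widetilde{X} \to \widetilde{C}$ (which need not be compact) adds only notational overhead once the local product picture is in place, since minimality is a purely local condition on the total space.
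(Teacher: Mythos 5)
Your overall strategy---slice along the fibers of $p_1$ (which are copies of $\mathbb{P}(F_c)\cong\mathbb{P}^1$), take a fiberwise supremum of the competitor's weight minus the pulled-back weight of $h_{L_Y}$, and compare the resulting psh object on $\widetilde{X}$ against the minimal singular metric $h_{L_{\widetilde{X}}}$---is exactly the paper's proof. The paper writes $h_{\widetilde{L}}=p_1^*h_\infty\, p_2^*h_{L_Y}\,e^{-\chi}$ for a smooth reference metric $h_\infty$ on $L_{\widetilde{X}}$, sets $\widetilde{\chi}(z,x):=\max_{w\in p^{-1}(x)}\chi(z,w,x)$, shows $h_\infty e^{-\widetilde{\chi}}$ is a semipositively curved metric on $L_{\widetilde{X}}$, and invokes minimality of $h_{L_{\widetilde{X}}}$; this is precisely the ``robust way around'' you describe in your last paragraph. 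Your step (2) detour through the minimal singular metric of an ample bundle on a compact curve is unnecessary: local boundedness from above of the quasi-psh function $\chi$ plus compactness of the fiber already makes the sup finite, which is all that is needed.

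However, there is a genuine gap at the point you yourself flag as the main obstacle, and the resolution is not the generic ``psh variation'' principle you cite but a specific structural fact about $h_{L_Y}$. For the fiberwise sup $\varphi(\tilde x)=\sup_{w}\bigl(\psi'(\tilde x,w)-\eta(w,x)\bigr)$ (with $\eta$ the local weight of $h_{L_Y}$) to be psh in $\tilde x$, you need each slice function $\tilde x\mapsto \psi'(\tilde x,w_0)-\eta(w_0,x)$ to be psh; the restriction of $\psi'$ to the slice $\{w=w_0\}$ is psh, but you are \emph{subtracting} $\eta(w_0,\cdot)$, which in an arbitrary trivialization of $L_Y$ is neither constant nor plurisuperharmonic in the base coordinate $x$, so the difference need not be psh and the sup argument breaks. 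The paper's fix is to use the Narasimhan--Seshadri unitary-flat structure on $F^*$ (Remark \ref{mumford_rmk}): in the associated local frames the weight of $h_{L_Y}$ is $\log(1+|w|^2)$, genuinely independent of $x$, so subtracting it on each slice $\{w=w_0\}$ only subtracts a constant and psh-ness survives. This is the step where the hypothesis that $L_Y$ is Mumford's example (rather than an arbitrary nef line bundle on a $\mathbb{P}^1$-bundle over $\widetilde{C}$) is actually used, and your write-up should make it explicit; Berndtsson-type positivity of direct images is not the mechanism here and would not directly give the sup statement you need.
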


Here a ``smooth holomorphic fiber bundle over $\widetilde{C}$" means a holomorphic map from smooth variety onto $\widetilde{C}$ which satisfies that, for each point $x\in \widetilde{C}$, there exists an open neighborhood $\widetilde{U}\subset\widetilde{C}$ of $x$ such that the  restriction to the preimage of $\widetilde{U}$ can be regarded as the natural projection from the product space of $\widetilde{U}$ and some manifold to $\widetilde{U}$. 
We remark that, if $L_{\widetilde{X}}$ in the above theorem is nef, then $\widetilde{L}$ is strictly nef. 
From this Theorem \ref{countereg} and Corollary \ref{gen_dps}, we can deduce Corollary {\ref{fujino_answer}}. 

We first review on the metric of Mumford's example \ref{fujino_eg} (1). 

\begin{remark}\label{mumford_rmk}
The line bundle $L_Y$ in Example \ref{fujino_eg} (1) admits a smooth Hermitian metric with semi-positive curvature. 
Indeed, we can construct such an metric as follows. 
First we remark that, by the Narasimhan-Seshadri theorem \cite{NS}, there exists a representation $\rho\colon\pi_1(\widetilde{C})\to U(2)$ such that the dual $F^*$ of $F$ is isomorphic to the quotient $(\mathbb{H}\times\mathbb{C}^2)/\pi_1(\widetilde{C})$, where $\pi_1(\widetilde{C})$ acts on $\mathbb{H}\times\mathbb{C}^2$ via $\rho$. Here we denote by $\mathbb{H}$ an upper half-plane $\{x\in\mathbb{C}\mid{\rm Im}\,z>0\}$. 
Thus the fiber-wise Euclidean metric induces a flat metric $h_{F^*}$ of $F^*$ and, on every open set $U\subset \widetilde{C}$, we can choose nice local frame $(s^*_1, s^*_2)$ of $F^*$ such that 
\[|s^*_1|^2_{h_{F^*}}\equiv 1,\ |s^*_2|^2_{h_{F^*}}\equiv 1,\ \langle s^*_1, s^*_2\rangle_{h_{F^*}}\equiv 0\]
holds on $U$. 
We use the function 
\[(w, x)\mapsto [s_1^*(x)+ws_2^*(x)]\in\mathbb{P}(F)\]
as a local coordinates system of $\mathbb{P}(F)$. Then the fiber-wise Fubini-Study metric defines a smooth Hermitian metric $h_{L_Y}$, whose curvature tensor can be computed as
\[\sqrt{-1}\Theta_{h_{L_Y}}=\sqrt{-1}\partial\overline{\partial}\log |s_1^*(x)+ws_2^*(x)|_{F^*}^2=\frac{\sqrt{-1}dw\wedge d\overline{w}}{(1+|w|^2)^2}. \] 
Therefore, this smooth Hermitian metric $h_{L_Y}$ clearly has a semi-positive curvature tensor. 
\end{remark}

\begin{proof}[Proof of Theorem \ref{countereg}]
We fix a singular Hermitian metric $h_{\widetilde{L}}$ of $\widetilde{L}$ with semi-positive curvature and show the existence of a constant $M_{\widetilde{W}}$ for each sufficiently small open set $\widetilde{W}\subset \widetilde{Y}$ such that $(p_1^*h_{L_{\widetilde{X}}})\otimes(p_2^*h_{L_Y})\leq M_{\widetilde{W}}h_{\widetilde{L}}$ holds on $\widetilde{W}$. 

Let us fix a smooth Hermitian metric $h_{\infty}$ of $L_{\widetilde{X}}$. 
We here remark that the tensor product $(p_1^*h_{\infty})\otimes(p_2^*h_Y)$ (or $p_1^*h_{\infty}p_2^*h_Y$, for simplicity) defines a smooth Hermitian metric of $\widetilde{L}$. 
Therefore, there exists a quasi-psh function $\chi\colon \widetilde{Y}\to\mathbb{R}$ such that 
\[h_{\widetilde{L}}=p_1^*h_{\infty}p_2^*h_0e^{-\chi}\]
holds. 
Let us describe our situations in the words of local weight functions. 
Here we use a local coordinates system $(z, w, x)$ of $\widetilde{Y}$, 
where $x$ is a local coordinate of an open set $U\subset \widetilde{C}$, 
$(z, x)$ is a local coordinates system of $\widetilde{X}$ where $z$ is a fiber coordinate, 
and $(w, x)$ is a local coordinates system of $Y$ just as in Remark \ref{mumford_rmk}. 
Let $\varphi_{\infty}$ be the local weight function of $h_{\infty}$ and $\varphi_{\widetilde{L}}$ be the local weight function of $h_{\widetilde{L}}$. Then the equation 
\[\varphi_{\widetilde{L}}(z, w, x)=\varphi_{\infty}(z, 1)+\log (1+|w|^2)+\chi(z, w, x)\]
holds. 
Since $\sqrt{-1}\partial\overline{\partial}\varphi_{\widetilde{L}}\geq 0$ holds from the assumption, 
the inequality 
\[\sqrt{-1}\partial_{z, x}\overline{\partial}_{z, x}\varphi_{\widetilde{L}}(z, w_0, x)\geq 0\]
also holds for all fixed point $w_0\in p^{-1}(U)$, where $p\colon Y\to\widetilde{C}$ is the natural projection and $\partial_{z, x}$ (resp. $\overline{\partial}_{z, x}$) is the operator  $\partial$ (resp. $\overline{\partial}$) with $w_0$ fixed. 
Thus, it can be said that the function $\chi|_{\widetilde{\pi}^{-1}(U)\times\{w_0\}}\colon\widetilde{\pi}^{-1}(U)\to\mathbb{R}; (z, x)\mapsto \chi(z, w_0, x)$ is a $\varphi_{\infty}$-psh function (i.e. $\sqrt{-1}\partial_{z, x}\overline{\partial}_{z, x}(\chi|_{\widetilde{\pi}^{-1}(U)\times\{w_0\}}+\varphi_{\infty})\geq 0$ holds) on $\widetilde{\pi}^{-1}(U)$, where $\widetilde{\pi}\colon\widetilde{X}\to\widetilde{C}$ is the natural surjection. Therefore the function 
\[\widetilde{\chi}(z, x):=\max_{w\in p^{-1}(x)}\chi(z, w, x)\]
is also a $\varphi_{\infty}$-psh function on $\widetilde{\pi}^{-1}(U)$ (Since $\chi$ is quasi-psh and thus it is locally bounded from above, we can use the same argument as the proof of \cite[1.5]{DPS00}). 
As this function $\widetilde{\chi}$ can be regarded as a global function on $\widetilde{X}$, it is clear that the singular Hermitian metric
$h_2:=h_{\infty}e^{-\widetilde{\chi}}$
of $\mathcal{O}_{\widetilde{X}}(\widetilde{D})$ is well-defined singular Hermitian metric with semi-positive curvature. 
Therefore, from the assumption, there exists a positive constant $M_W$ for each sufficiently small open set $W\subset \widetilde{X}$ such that $M_Wh_{\infty}e^{-\widetilde{\chi}}\geq h_{\widetilde{X}}$ holds on $W$. 
Thus we obtain an inequality 
\[
p_1^*h_{\widetilde{X}}p_2^*h_Y\leq M_Wp_1^*(h_{\infty}e^{-\widetilde{\chi}})p_2^*h_Y
\leq M_Wp_1^*h_{\infty}p_2^*h_Ye^{-\chi}=M_Wh_{\widetilde{L}}
\]
holds on $p_1^{-1}(W)$ and therefore we can conclude that the metric $p_1^*h_{\widetilde{X}}p_2^*h_Y$ is a minimal singular metric of $\widetilde{L}$. 
\end{proof}


\end{document}